\numberwithin{equation}{section}
\theoremstyle{plain}
\newtheorem{theorem}{Theorem}[section]
\newtheorem{lemma}[theorem]{Lemma}
\newcommand{\be}{\begin{equation}}
\newcommand{\ee}{\end{equation}}
\newcommand{\bqa}{\begin{eqnarray}}
\newcommand{\eqa}{\end{eqnarray}}
\newcommand{\bqn}{\begin{eqnarray*}}
\newcommand{\eqn}{\end{eqnarray*}}
\newcommand{\bdes}{\begin{description}}
\newcommand{\edes}{\end{description}}
\newcommand{\bitem}{\begin{itemize}}
\newcommand{\eitem}{\end{itemize}}
\newcommand{\benum}{\begin{enumerate}}
\newcommand{\eenum}{\end{enumerate}}
\newcommand{\ep}{\varepsilon}
\newcommand{\ga}{\alpha}
\newcommand{\gl}{\lambda}
\newcommand{\gD}{\Delta}
\newcommand{\rtr}{{\rm tr}}
\newcommand{\fii}{\frac{1}{2}}
\newcommand{\fnn}{\frac1{\sqrt n}}
\newcommand{\siln}{\sum_{i=1}^n}
\newcommand{\skln}{\sum_{k=1}^n}
\newcommand{\non}{\nonumber\\}
\newcommand{\bbH}{{\bf H}}
\newcommand{\bbI}{{\bf I}}
\newcommand{\bbX}{{\bf X}}
\newcommand{\bay}{\begin{array}}
\newcommand{\eay}{\end{array}}
\begin{document}

\begin{frontmatter}
%%%%%%%%%%%%%%%%%%%%%%%%%%%%%%%%%%%%%%%%%%%%%%
%%                                          %%
%% Enter the title of your article here     %%
%%                                          %%
%%%%%%%%%%%%%%%%%%%%%%%%%%%%%%%%%%%%%%%%%%%%%%
\title{A revisit of the circular law}
%\title{A sample article title with some additional note\thanksref{T1}}
\runtitle{Circular Law}
%\thankstext{T1}{A sample of additional note to the title.}

\begin{aug}	
			%%%%%%%%%%%%%%%%%%%%%%%%%%%%%%%%%%%%%%%%%%%%%%%
			%% ORCID can be inserted by command:         %%
			%% \orcid{0000-0000-0000-0000}               %%
			%%%%%%%%%%%%%%%%%%%%%%%%%%%%%%%%%%%%%%%%%%%%%%%
			\author[A]{\fnms{Zhidong}~\snm{Bai}\ead[label=e1]{baizd@nenu.edu.cn}}
			\author[A]{\fnms{Jiang}~\snm{Hu}\ead[label=e2]{huj156@nenu.edu.cn}}
	%		\author[B]{\fnms{Jack W.}~\snm{Silverstein}\ead[label=e3]{jack@ncsu.edu}}
			%%%%%%%%%%%%%%%%%%%%%%%%%%%%%%%%%%%%%%%%%%%%%%
			%% Addresses                                %%
			%%%%%%%%%%%%%%%%%%%%%%%%%%%%%%%%%%%%%%%%%%%%%%
			\address[A]{KLASMOE and School of Mathematics and Statistics, Northeast Normal University, China\printead[presep={,\ }]{e1,e2}}
			%			\address[C]{Northeast Normal University\printead[presep={,\ }]{e3}}
		%	\address[B]{Department of Mathematics, North Carolina State University, USA\printead[presep={,\ }]{e3}}
		\end{aug}

\begin{abstract}
Consider a complex random $n\times n$ matrix $\bbX_n=(x_{ij})_{n\times n}$, whose entries $x_{ij}$ are independent random variables with zero means and unit variances. It is well-known that Tao and Vu  (Ann Probab 38: 2023-2065, 2010) resolved the circular law conjecture, establishing that if the $x_{ij}$'s are independent and identically distributed random variables with zero mean and unit variance, the empirical spectral distribution of $\frac{1}{\sqrt{n}}\bbX_n$ converges almost surely to the uniform distribution over the unit disk in the complex plane as $n \to \infty$. This paper demonstrates that the circular law still holds under the more general Lindeberg's condition: $$
\frac1{n^2}\sum_{i,j=1}^n\mathbb{E}|x_{ij}^2|I(|x_{ij}|>\eta\sqrt{n})\to 0,\mbox{~~~as $n \to \infty$}.
$$

This paper is a revisit of the proof procedure of the circular law by Bai in (Ann Probab 25: 494-529, 1997). The key breakthroughs in the paper are establishing a general strong law of large numbers under Lindeberg's condition and the uniform
upper bound for the integral with respect to the smallest eigenvalues of random matrices. These advancements significantly streamline and clarify the proof of the circular law, offering a more direct and simplified approach than other existing methodologies.

\end{abstract}

\begin{keyword}[class=MSC]
\kwd[Primary ]{15A52}
\kwd{60F17}
\kwd[; secondary ]{60F15}
\end{keyword}

\begin{keyword}
\kwd{Circular law}
\kwd{random matrices}
\kwd{Lindeberg's condition}
\kwd{LSD}
\end{keyword}

\end{frontmatter}

\section{Introduction}

Suppose that $\bbX_n$ is an $n
\times n$ matrix with entries $x_{kj}$, where $\{x_{kj},\
k,j=1,2,\dots\}$ forms an infinite double array of  complex
random variables.  Using the complex
eigenvalues $\lambda_1, \, \lambda_2, \, \dots, \, \lambda_{n}$ of
$\fnn \bbX_n$, we define the empirical spectral distribution  (ESD) of $\fnn \bbX_n$ by\[
\mu_n(x,y) = \frac1n \# \left\{ i \le n: \Re(\lambda_{k})\le x,\
\Im(\lambda_{k})\le y \right\},
\]
where $\# E$ denotes the cardinality of the set $E$.

The main result of our paper is the following:
\begin{theorem} \label{circularlaw} Suppose that the underlying distribution of
elements of $\bbX_n$ are independent random variables with zero means, unit variances, and satisfying the Lindeberg's condition: for any fixed $\eta>0$
\bqa\label{Lindecirc}
\frac1{n^2}\sum_{i,j=1}^n\mathbb{E}|x_{ij}^2|I(|x_{ij}|>\eta\sqrt{n})\to 0.
\eqa
Then, with probability 1, 
$\mu_n(x,y)$  tends to the circular law, i.e., the
uniform distribution over the unit disk in the complex plane.
\end{theorem}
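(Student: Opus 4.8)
The plan is to follow the Hermitization (Girko--Bai) scheme used in Bai (1997): instead of studying the non-Hermitian matrix $\fnn\bbX_n$ directly, reduce everything to the spectral analysis of the family of Hermitian matrices
\[
\bbH_n(z)=\bigl(\fnn\bbX_n-z\bbI\bigr)\bigl(\fnn\bbX_n-z\bbI\bigr)^{*},\qquad z\in\mathbb C,
\]
through the logarithmic potential $U_{\mu_n}(z)=-\int\log|z-\lambda|\,d\mu_n(\lambda)$. Writing $\nu_n(\cdot,z)$ for the ESD of $\bbH_n(z)$, one has
\[
U_{\mu_n}(z)=-\tfrac1n\log\bigl|\det(\fnn\bbX_n-z\bbI)\bigr|=-\tfrac12\int_0^\infty\log x\,d\nu_n(x,z).
\]
Since the circular law $\mu$ is the unique probability measure on $\mathbb C$ with a prescribed, explicitly computable logarithmic potential, and since $\{\mu_n\}$ is tight once $\tfrac1n\sum_k|\lambda_k|^2\le\tfrac1{n^2}\sum_{i,j}|x_{ij}|^2\to1$ a.s. (itself an instance of the strong law under Lindeberg's condition), it suffices to prove that for Lebesgue-almost every $z$,
\[
\int_0^\infty\log x\,d\nu_n(x,z)\ \longrightarrow\ \int_0^\infty\log x\,d\nu(x,z)\qquad\text{a.s.,}
\]
where $\nu(\cdot,z)$ is the deterministic limit of $\nu_n(\cdot,z)$, whose logarithmic moment reproduces $U_\mu(z)$; a standard potential-theoretic continuity lemma then upgrades this to the a.s. weak convergence $\mu_n\to\mu$.

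\textbf{Step 1: truncation.} Using Lindeberg's condition \eqref{Lindecirc}, replace the entries $x_{ij}$ by truncated, recentered and rescaled variables $\hat x_{ij}=\bigl(x_{ij}I(|x_{ij}|\le\eta\sqrt n)-\mathbb E[\,\cdot\,]\bigr)/\sigma_{ij}$, which are bounded by $O(\eta\sqrt n)$ and still have zero mean and unit variance. The mean correction is a low-rank (or small-Hilbert--Schmidt) perturbation and the rescaling is close to the identity, so by the usual rank/perturbation inequalities for logarithmic potentials the change in $U_{\mu_n}$ is asymptotically negligible; the a.s. control of the truncation error is exactly where the general strong law of large numbers under Lindeberg's condition is invoked. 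Hence we may and do assume henceforth that $|x_{ij}|\le\eta\sqrt n$ for an arbitrarily small fixed $\eta$, so in particular the operator norm of $\fnn\bbX_n-z\bbI$ is $O(1)$ almost surely.

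\textbf{Step 2: convergence of the singular value distributions.} For each fixed $z$, show that $\nu_n(\cdot,z)$ converges almost surely to a nonrandom limit $\nu(\cdot,z)$, characterized through its Stieltjes transform, which solves a self-consistent quadratic equation depending only on $|z|$. This is by now classical Marchenko--Pastur-type analysis (a martingale decomposition in the spirit of Bai--Silverstein together with eigenvalue interlacing); after the Lindeberg truncation only the first two moments of the entries enter, so the limit coincides with the one in the i.i.d. case. A direct computation then identifies $\int_0^\infty\log x\,d\nu(x,z)$ with $-2\,U_\mu(z)$, so this is the correct candidate limit.

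\textbf{Step 3: the main obstacle --- control of the smallest singular values, and assembly.} It remains to pass from weak convergence $\nu_n(\cdot,z)\to\nu(\cdot,z)$ to convergence of the logarithmic moments $\int_0^\infty\log x\,d\nu_n(x,z)$. The tail $x\to\infty$ is harmless: from the boundedness of $\int_0^\infty x\,d\nu_n(x,z)=\tfrac1n\|\fnn\bbX_n-z\bbI\|_F^2$ one gets $\int_M^\infty\log x\,d\nu_n\le M^{-1/2}\int x\,d\nu_n\to0$ uniformly as $M\to\infty$. The delicate part is the neighborhood of $x=0$: one must show that for almost every $z$, almost surely $\int_0^\epsilon|\log x|\,d\nu_n(x,z)\to0$ as $\epsilon\to0$, uniformly in $n$. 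Equivalently, one needs a quantitative lower bound on the smallest singular value of $\fnn\bbX_n-z\bbI$ (it is not super-polynomially small), together with a bound of the form $\nu_n\bigl([0,t],z\bigr)\le C(z)\,t^{\kappa}$ for small $t$, valid under the sole assumption of Lindeberg's condition --- that is, without boundedness of a density or any moment beyond the second. This uniform upper bound for the integral against the smallest eigenvalues is, as the abstract indicates, the crux of the argument, and I expect it to be by far the hardest step. Granting it, the conclusion is routine: combining the a.s. weak convergence of $\nu_n(\cdot,z)$ with this uniform integrability near $0$ and $\infty$ yields $U_{\mu_n}(z)\to U_\mu(z)$ a.s. for a.e.\ $z$, and the potential-theoretic convergence lemma together with the tightness of $\{\mu_n\}$ then gives $\mu_n\to$ circular law with probability one.
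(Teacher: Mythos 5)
Your Steps 1--2 are broadly consistent with the paper (a.s.\ convergence of $\nu_n(\cdot,z)$ to $\nu(\cdot,z)$ under Lindeberg's condition, which the paper imports from Zhou--Bai--Hu, plus tightness of the eigenvalues via a strong law for $\frac1{n^2}\sum_{i,j}|x_{ij}|^2$, which the paper proves as Lemmas \ref{lln}--\ref{lln*}; the paper does not even need your truncation step or an $O(1)$ operator-norm bound, only Frobenius-type control). The genuine gap is in your Step 3, which you yourself identify as the crux and then leave as an assumption ("granting it, the conclusion is routine"). Worse, the route you propose to close it --- a not-super-polynomially-small lower bound on the smallest singular value of $\fnn\bbX_n-z\bbI$ together with a Wegner-type bound $\nu_n([0,t],z)\le C(z)t^{\kappa}$ --- is precisely what is \emph{not} available under the sole assumption of Lindeberg's condition (no bounded density, no moments beyond two, entries not identically distributed); such estimates are exactly what forced the extra hypotheses in Bai (1997), Pan--Zhou, G\"otze--Tikhomirov and Tao--Vu, and the stated purpose of this paper is to dispense with them.

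The paper's actual key idea is to avoid any estimate of the smallest singular value. Working with Girko's $z$-integrated formula rather than pointwise in $z$, it uses the identity $\frac{\partial}{\partial s}\int_0^\infty\log x\,\nu_n(dx,z)=\frac{2}{n}\sum_{k}\frac{s-\lambda_{kr}}{|z-\lambda_k|^2}$ and observes that the contribution of the spectrum near $z$ is controlled by the explicit two-dimensional integral $\int\!\!\int_{|z-\lambda_k|\le\varepsilon_n}\frac{|\lambda_{kr}-s|}{|z-\lambda_k|^2}\,ds\,dt\le 8\varepsilon_n$, a bound that holds no matter how small the smallest eigenvalue of $\bbH_n$ actually is; on the complementary random set $T(\varepsilon)$ (the rectangle $T$ with small ``drum-shaped'' neighborhoods of the eigenvalues removed) the $s$-integral is evaluated exactly by the fundamental theorem of calculus, and only the uniform a.s.\ convergence $\nu_n\to\nu$ is needed to pass to the limit. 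In other words, the $\log$-singularity is tamed by integrating over $z$ in two dimensions and excising random exceptional regions, not by lower-bounding $s_{\min}$; your pointwise-in-$z$ potential-theoretic formulation forecloses this move and re-imports exactly the difficulty the paper is designed to circumvent. As written, the proposal therefore does not constitute a proof of the theorem at its central step.
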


Since the early 1950s, the conjecture that the distribution $\mu_n (x,y)$ tends to the circular law has stimulated considerable academic interest. The seminal response was proffered by \cite{Mehta67R}, addressing the scenario in which $x_{ij}$ are independently and identically distributed (i.i.d.) standard complex normal variables. \cite{Mehta67R} utilized the joint density function of the eigenvalues of the matrix $\fnn\bbX_n$, a formula originally deduced by \cite{Ginibre65S}. Silverstein in \citep{CohenK86R} and Edelman in \citep{Edelman97P} also made important contributions in Gaussian cases. For the non-Gaussian cases, \cite{Girko85C} and \cite{Bai97C} were the first to formally articulate a proof of the circular law under the conditions that $\{x_{kj},\
k,j=1,2,\dots\}$ are i.i.d. and the density of the underlying distribution is bounded. This condition was crucial in the estimation of the smallest singular values of a Hermitian type matrix. Building upon this foundation, \cite{PanZ10C} expanded the methodologies of \cite{RudelsonV08L} regarding the smallest singular values to include non-centralized matrices, subsequently negating the necessity of the density assumption and affirming the circular conjecture under conditions of $4+\varepsilon$ moments. At the same time,   \cite{TaoV08R,GotzeT10C} furthered the investigation under a lesser moment condition of $2+\varepsilon$. \cite{TaoV10Ra} subsequently resolved the conjecture by applying the Lindeberg substitution principle in the i.i.d. case. For more information about the circular law, we refer to the survey by \cite{BordenaveC12C} and references therein. This survey discussed various results around the circular law including both historical context and contemporary research directions in the field of random matrix theory.

In this paper, unlike \cite{TaoV10Ra}, we introduce a more direct and simpler method to prove the circular law, which is a non-trivial extension of \cite{Bai97C}. 
Remarkably, the novel proof prevails under a weaker condition than the i.i.d. assumption in \citep{TaoV10Ra}, specifically under Lindeberg's condition (\ref{Lindecirc}).

In the sequel, we adopt some of the notation and basic results from \citep{Bai97C,BaiS10S} to make our proof easier to follow.
Denote the ESD of the Hermitian matrix
$\bbH_n=(\fnn\bbX_n-z\bbI)(\fnn\bbX_n-z\bbI)^*$  by  $$\nu_n(x,z)=\frac1n \# \left\{1\leq i \le n: \lambda_{i}^{\bbH_n}(z)\le x \right\},$$ where $\bbI$ is the identity matrix, $\lambda_{k}^{\bbH_n}(z), 1\leq i \le n$  are the real eigenvalues of $\bbH_n$ and $^*$ stands for the complex conjugate and transpose of the matrix. The following lemma, proposed by \cite{Girko85C},  established a relation between the characteristic function of the ESD $\mu_n(x,y)$ and an integral involving the ESD of the Hermitian matrix $\bbH_n$.
\begin{lemma}\label{circlemma} For any $uv\neq 0$, we have the following formula for the two-dimensional characteristic
function $c_n(u,v)$ of $\mu_n(x,y)$, i.e.,
\bqa &&c_n(u,v)=:\int\!\!\!\int e^{iux+ivy}\mu_n(dx,dy)\non
&=&\frac{u^2+v^2}{4iu\pi}\int\!\!\!\int \frac{\partial}{\partial
s}\left[\int_0^\infty \ln x \nu_n(dx,z)\right] e^{ius+ivt}dtds,
\label{girko} \eqa where $z=s+it$ and $i=\sqrt{-1}$.
\end{lemma}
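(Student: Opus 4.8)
The plan is to reduce \eqref{girko} to an elementary, purely deterministic Fourier identity for the logarithmic kernel $\ln|z|$, via the log–determinant representation of $\int_0^\infty\ln x\,\nu_n(dx,z)$. Since $\bbH_n=(\fnn\bbX_n-z\bbI)(\fnn\bbX_n-z\bbI)^*$ is Hermitian nonnegative definite with $\prod_{i=1}^n\lambda_i^{\bbH_n}(z)=\det\bbH_n=\bigl|\det(\fnn\bbX_n-z\bbI)\bigr|^2=\prod_{k=1}^n|\lambda_k-z|^2$, we get
\be
g_n(z):=\int_0^\infty\ln x\,\nu_n(dx,z)=\frac1n\sum_{i=1}^n\ln\lambda_i^{\bbH_n}(z)=\frac2n\sum_{k=1}^n\ln|\lambda_k-z|,
\ee
which is finite and real-analytic in $z=s+it$ off the finite (hence Lebesgue-null) set $\{\lambda_1,\dots,\lambda_n\}$, where it equals $-\infty$. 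Differentiating the closed form termwise, $\partial_s g_n(z)=\frac2n\sum_{k=1}^n\frac{s-\Re\lambda_k}{(s-\Re\lambda_k)^2+(t-\Im\lambda_k)^2}$ off that null set, which is all \eqref{girko} needs. On the other side, $\mu_n=\fn\sum_{k=1}^n\delta_{(\Re\lambda_k,\Im\lambda_k)}$ gives $c_n(u,v)=\fn\sum_{k=1}^n e^{iu\Re\lambda_k+iv\Im\lambda_k}$ directly.

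By linearity of the integral over the finite $k$-sum, \eqref{girko} will follow once I prove the single-point kernel identity
\be\label{kernelid}
\int_{-\infty}^\infty\!\!\int_{-\infty}^\infty\frac{s-a}{(s-a)^2+(t-b)^2}\,e^{ius+ivt}\,dt\,ds=\frac{2\pi iu}{u^2+v^2}\,e^{iua+ivb}\qquad(uv\neq0)
\ee
for arbitrary $a,b\in\mathbb R$: substituting $\partial_s g_n$ into the right-hand side of \eqref{girko} and applying \eqref{kernelid} with $(a,b)=(\Re\lambda_k,\Im\lambda_k)$ collapses $\frac{u^2+v^2}{4iu\pi}\cdot\frac2n\sum_k\frac{2\pi iu}{u^2+v^2}e^{iu\Re\lambda_k+iv\Im\lambda_k}$ to exactly $\fn\sum_k e^{iu\Re\lambda_k+iv\Im\lambda_k}=c_n(u,v)$. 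A translation $s\mapsto s+a$, $t\mapsto t+b$ reduces \eqref{kernelid} to the case $a=b=0$.

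For \eqref{kernelid} with $a=b=0$, the crucial point is that the double integral is to be read as the \emph{iterated} integral in the displayed order — inner in $t$, then in $s$ — which keeps every stage absolutely convergent when $uv\neq0$ and thereby sidesteps the merely conditional convergence of the naive two-dimensional Fourier integral of $\ln|z|$. The Poisson-kernel formula $\int_{-\infty}^\infty\frac{e^{ivt}}{s^2+t^2}\,dt=\frac{\pi}{|s|}e^{-|v||s|}$ (immediate by residues for $v\neq0$) gives $\int_{-\infty}^\infty\frac{s}{s^2+t^2}e^{ivt}\,dt=\pi\,\mathrm{sgn}(s)\,e^{-|v||s|}$, a bounded function of $s$ with a harmless jump at $0$ and exponential decay; then $\int_{-\infty}^\infty\pi\,\mathrm{sgn}(s)\,e^{-|v||s|}e^{ius}\,ds=\pi\bigl(\tfrac1{|v|-iu}-\tfrac1{|v|+iu}\bigr)=\tfrac{2\pi iu}{u^2+v^2}$, which is \eqref{kernelid}.

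The proof is short and there is no deep obstacle; the one thing that must be handled with care is precisely the conditional (not absolute) convergence of the double integral in \eqref{girko}. I would deal with it by committing to the displayed order $dt\,ds$ throughout, as above; alternatively one may insert a Gaussian regularizer $e^{-\varepsilon(s^2+t^2)}$, carry out the same computation, and let $\varepsilon\downarrow0$. The remaining points — finiteness and analyticity of $g_n$ off $\{\lambda_k\}$, legitimacy of termwise differentiation there, and interchanging the finite $k$-sum with the integral — are routine, so essentially all the content is the one-line residue evaluation behind \eqref{kernelid}.
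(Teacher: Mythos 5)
Your proposal is correct: the reduction via $\det\bbH_n=\prod_k|\lambda_k-z|^2$ to the per-eigenvalue kernel identity, the iterated-integral interpretation (inner in $t$, then $s$) to handle the merely conditional convergence, and the Poisson-kernel/residue evaluation all check out, and the constants collapse exactly to $c_n(u,v)$. This is essentially the same argument as the one the paper relies on — the paper does not prove the lemma itself but cites Section 12.1.2 of Bai and Silverstein, where the proof proceeds by the same log-determinant reduction and explicit Fourier computation of the kernel.
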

The proof of this lemma can be found in Section 12.1.2 of \cite{BaiS10S}. It is clear that to prove Theorem  \ref{circularlaw} we only need to prove for every pair $(u,v)$ such that $uv\neq 0$, $c_n(u,v)\to c(u,v)$ almost surely, where $c(u,v)$ is the characteristic function of the uniform distribution $\mu(x,y)$ of
the unit circle in the plane.   Moreover, it has been shown in Lemma 3.2  of \cite{Bai97C} that for all $uv \neq 0$,
\begin{align}\label{cuv}
c(u, v) = \frac{1}{\pi} \int_{x^2 + y^2 \leq 1} e^{iux+ivy} dxdy 
= \frac{u^2+v^2}{4iu\pi} \int  \int g(s,t)e^{ius+ivt}dt ds,
\end{align} 
where
\[ g(s,t) =
\begin{cases}
    \frac{2s}{s^2+t^2}, & \text{if } s^2+t^2 > 1,\\
    2s, & \text{otherwise}.
\end{cases}
\]
Thus, analogous to the proof steps in \cite{Bai97C}, we outline the proof of Theorem \ref{circularlaw} as follows.
\begin{itemize}
\item[Step I:]  $\nu_n(x, z)$ converges to $\nu(x, z)$ uniformly in every bounded region of $z$,
where
$\nu(x,z)$ is the limiting spectral distribution (LSD) of the sequence of matrices
$\bbH_n$, and  $\nu(x,z)$ determines the
circular law, i.e., $$
  g(s,t)=  \frac{\partial}{\partial s}\int_0^\infty \ln x\nu(dx,z).$$ 	
\item[Step II:]  
%Reducing the range of integration. We need to reduce the
%range of integration to a finite rectangle, so that the dominated
%convergence theorem is applicable. As  will be seen, t
The proof of
the circular law  can be reduced to showing that, for every large $A>0$% and small $\ep>0$, 
\bqa &&
\lim_{n\to\infty}\int\!\!\!\int_{T}\left[\frac{\partial}{\partial s} \int_0^\infty
\ln x \nu_n(dx,z)-\frac{\partial}{\partial s} \int_0^\infty
\ln x \nu(dx,z)\right] e^{ius+ivt} dsdt=0, \label{limint} \eqa where
$T=\{(s,t);\ |s|\le A,|t|\le A^3\}$.%, |\sqrt{s^2+t^2}-1|\ge\ep\}$ 
	\item[Step III:] For a suitably defined sequence $\ep_n\to0$, with probability 1,
\be
\lim_{n\to\infty}\int\!\!\!\int_{T} \left|\frac{\partial}{\partial s}\int_0^{\ep_n^2}\ln x
\nu_n(dx,z)\right|dsdt=0, \label{ctail} \ee 
\be
\lim_{n\to\infty}\int\!\!\!\int_{T} \left|\frac{\partial}{\partial s}\int_0^{\ep_n^2}\ln x
\nu(dx,z)\right|dsdt=0, \label{cctail} \ee 
and
	 \be
\lim_{n\to\infty}\int\!\!\!\int_{T(\ep)}\frac{\partial}{\partial s}\int_{0}^{\infty}
\ln x (\nu_n(dx,z)-\nu(dx,z))dsdt=0, \label{body} \ee 
where 
$T(\ep)$ is a random subset of $T$ depending on  the eigenvalues  $\lambda_k$, $k=1,\dots,n$,  and will be defined in Section \ref{SIII}. 
\end{itemize}

Compared with the pioneering work of \cite{Bai97C}, the main difficulty of the current study lies in the fact that Lindeberg's condition \ref{Lindecirc} in Theorem \ref{circularlaw} can not guarantee enough estimations of the smallest eigenvalues of $\fnn\bbX_n-z\bbI$ and the convergence rate of $\nu_n(x,z)$ to $\nu(x,z)$,
which cannot be handled in the same way as they did for the i.i.d. and higher moments existing cases. Technically, the key breakthroughs in the paper are twofold. First, we establish a general strong law of large numbers under the Lindeberg's condition, which ensures the tightness of the eigenvalues of $\fnn \bbX_n$. Second, we provide a uniform upper bound on the integral with respect to the smallest eigenvalues of $(\fnn\bbX_n-z\bbI)(\fnn\bbX_n-z\bbI)^*$, which helps the proof of the circular law without the estimates of these smallest eigenvalues. It should be noted that some uniform estimate of the smallest eigenvalues of $(\fnn\bbX_n-z\bbI)(\fnn\bbX_n-z\bbI)^*$ with respect to $z$ is crucial for other existing methods (e.g., \cite{Bai97C,PanZ10C,GotzeT10C}). 

In the following three sections, we will present the proofs of Steps I--III, respectively.

\section { Step I: Convergence of $\nu_n(x, z)$} Defined by the Stieltjes transform of $\nu(x,z)$
 $$\Delta(\alpha)=\int\frac{1}{x-\alpha}\nu(dx,z),~~~\Im\alpha>0.$$
From Theorem 2.1 of  \cite{ZhouB23L} we have that 
under Lindeberg's condition (\ref{Lindecirc}), $\nu_n(x,z)$ almost surely tends to a limit $\nu(x,z)$,  
whose Stieltjes transform \(\Delta=\Delta(\alpha)\) is the unique solution on the upper half complex plane to the equation
\begin{align}\label{eq2.1}
   \Delta = \frac{1}{\frac{|z|^2}{1 + \Delta} - (1 +  \Delta )\alpha}.
\end{align}
Here we choose $p = n$,  $\mathbf{T}_n =  \mathbf{I}$ and $\mathbf{R}_n = -\sqrt{n}z \mathbf{I}$ in Theorem 2.1 of \cite{ZhouB23L}.   From the proof of Theorem 2.1 of \cite{ZhouB23L}, we can also conclude that the convergence of $\nu_n(x,z)$ to $\nu(x,z)$ is uniformly in every bounded region of $z$. Thus, we omit the details. 
Note that \eqref{eq2.1} coincides with equation (4.2) of \cite{Bai97C}, i.e.,
\begin{align}\label{eq2.2}
  \Delta^3 + 2\Delta^2 + \left(\frac{\alpha + 1 - |z|^2}{\alpha}\right)\Delta + \frac{1}{\alpha} = 0.
\end{align}
It is clear that the solution of the above equation
has three analytic branches when $\ga\neq 0$ and when there is no
multiple root.  In fact, there is only one branch (denoted by $\Delta$) that can be the Stieltjes transform of $\nu(x,z)$. 
 Let $m_2(\ga)$ and
$m_3(\ga)$ be the other two branches.  If, say, $m_2(\ga)$ is
another Stieltjes transform.  For $\alpha$ real converging to
$-\infty$, from Vieta's formula we have  that $\gD(\ga)+m_2(\ga)+m_3(\ga)=-2$, which implies $m_3(\ga)\to-2$. But from Vieta's formula
$\alpha\gD(\ga)m_2(\ga)m_3(\ga)=-1$ and the fact that
$\alpha\gD(\ga)$ converges to $-1$ as $\alpha\to-\infty$, we would have
$m_3(\ga)$ unbounded.  Thus, it is a contradiction. Analogously, $m_3(\ga)$ is
not the Stieltjes transform of $\nu(x,z)$. 
Moreover, 
From Lemma 4.4 in  \cite{Bai97C}, we know that  
\[
    \frac{\partial}{\partial s} \int_0^\infty \ln x \nu(dx, z) = g(s, t),
\]
which together with \eqref{cuv}  defines the circular law. 
Thus, we complete the proof of Step I.

\section { Step II: Integral range reduction\label{seccirc3}}
Before we state the proof of Step II, we give two novel lemmas, which are the key to our proof.
\begin{lemma}\label{lln}
Let $x_i, i=1,2,\dots,n$ be independent nonnegative random variables with means 1. Then for any positive constant $a$ and integer $b>1$, we have that 
\bqa\label{lln1}
\mathbb{E} \siln x_iI(x_i>a)I(\#\{j;x_j>a\}<b)\le b,
\eqa
where $I(\cdot)$ is the indicator function.
\end{lemma}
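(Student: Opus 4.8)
The plan is to exploit independence by conditioning coordinate by coordinate. Fix an index $i$: the factor $x_iI(x_i>a)$ depends only on $x_i$, whereas on the event $\{x_i>a\}$ the count $\#\{j:x_j>a\}$ equals $1+M_i$, where $M_i:=\#\{j\neq i:x_j>a\}$ is a function of $(x_j)_{j\neq i}$ alone and hence independent of $x_i$. Consequently $I(x_i>a)\,I(\#\{j:x_j>a\}<b)=I(x_i>a)\,I(M_i\le b-2)$, and independence gives
\[
\mathbb{E}\big[x_iI(x_i>a)\,I(\#\{j:x_j>a\}<b)\big]=\mathbb{E}\big[x_iI(x_i>a)\big]\,\mathbb{P}(M_i\le b-2).
\]
Summing over $i=1,\dots,n$ reduces \eqref{lln1} to the inequality
\[
\sum_{i=1}^n\mathbb{E}\big[x_iI(x_i>a)\big]\,\mathbb{P}(M_i\le b-2)\ \le\ b .
\]

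To establish this I would keep track of the global count $N:=\#\{j:x_j>a\}$. The event $\{M_i\le b-2\}$ is governed by $N$: it occurs for every index $i$ when $N\le b-2$, for exactly the indices $i$ with $x_i>a$ when $N=b-1$, and for no index when $N\ge b$; in particular it forces at most $b-1$ coordinates above the level $a$. The truncated means satisfy both $\mathbb{E}[x_iI(x_i>a)]\le\mathbb{E}[x_i]=1$ and $\sum_i\mathbb{E}[x_iI(x_i>a)]\ge a\sum_i\mathbb{P}(x_i>a)$, and one must play these facts off against the probabilities $\mathbb{P}(N=k)$, $k\le b-1$, so that the total collapses to $b$. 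Equivalently, writing $S^{+}:=\sum_{i:x_i>a}x_i$, the left-hand side of \eqref{lln1} equals $\mathbb{E}[S^{+}\,I(N\le b-1)]$, and on $\{N\le b-1\}$ the sum $S^{+}$ has at most $b-1$ summands; bounding its conditional expectation via the mean-$1$ normalization is the same task.

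The conditioning/independence reduction of the first paragraph is routine. The main obstacle is the combinatorial expectation estimate of the second paragraph: a crude application of $\mathbb{E}[x_iI(x_i>a)]\le 1$ alone is too weak, and the argument must combine the indicator constraining the number of exceedances with the normalization $\mathbb{E}x_i=1$ in a tightly coordinated way to land exactly on the bound $b$.
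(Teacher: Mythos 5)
Your opening reduction is correct: since on $\{x_i>a\}$ the count $\#\{j:x_j>a\}$ equals $1+M_i$ with $M_i=\#\{j\neq i:x_j>a\}$ independent of $x_i$, the left-hand side of \eqref{lln1} equals $\sum_{i=1}^n\mathbb{E}\bigl[x_iI(x_i>a)\bigr]\,\mathbb{P}(M_i\le b-2)$. But that is where the proposal stops. The ``combinatorial expectation estimate'' you defer to your second paragraph is the entire content of the lemma, and you give no argument for it: observing how $\{M_i\le b-2\}$ relates to $N=\#\{j:x_j>a\}$, and rewriting the target as $\mathbb{E}\bigl[S^{+}I(N\le b-1)\bigr]$, merely restates the original quantity. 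As it stands this is a plan, not a proof, and that is a genuine gap.

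Moreover, the bound you are hoping to extract is not just hard --- it is false, so the plan cannot be completed as stated. Take $n=10$, $x_1,\dots,x_{10}$ i.i.d.\ with $\mathbb{P}(x_i=10)=1/10$, $\mathbb{P}(x_i=0)=9/10$ (so $\mathbb{E}x_i=1$), and $a=1$, $b=2$. Then the left side of \eqref{lln1} equals $10\,\mathbb{P}(N=1)=10\,(9/10)^9\approx 3.87>2=b$; in your reduced form, $\mathbb{E}[x_iI(x_i>a)]=1$ and $\mathbb{P}(M_i\le b-2)=(9/10)^9$ for every $i$, so the sum is of order $n/e$, and in general it can be of order $n$. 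This also pinpoints the weak step in the paper's own argument, which partitions on the exact exceedance set and collapses the sum over index sets into a maximum by disjointness of the events $E_{\{i_1,\dots,i_k\}}$: once $x_{i_t}$ is replaced by $\mathbb{E}[x_{i_t}I(x_{i_t}>a)]$ and the constraint on the $i_t$-th coordinate is dropped, the events $\bigcap_{j\neq i_t}A_{\{j,i_1,\dots,i_k\}}$ are no longer disjoint across index sets, so one only gets $\mathbb{E}J_{ek}\le \mathbb{E}J_k$ (in the example above, $\mathbb{E}J_1\approx 3.87$ while $\mathbb{E}J_{e1}\approx 0.74$), which is the wrong direction for the claimed identity $\mathbb{E}J_k=\mathbb{E}J_{ek}$. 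So your instinct that this is the crux was right, but no coordination of the mean-one normalization with the exceedance-count indicator can deliver the bound $b$; the statement itself needs additional hypotheses (for instance, control of $\sum_i\mathbb{E}x_iI(x_i>a)$ or of $\sum_i\mathbb{P}(x_i>a)$ relative to $a$ and $b$) before any proof, yours or the paper's, can go through.
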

\begin{proof}
% If any two $x$-variables are unequal with probability one, then the oder statistics of $x_i, i=1,2,\dots,n$ van be uniquely written as $y_{n1}>y_{n2}>\cdots>y_{nn}$. Write $N=\max\{i, x_i>a\}$. Then, by assumption, we have  
%\bqn
%&&\mathbb{E}\left(\siln x_iI(x_i>a)I_{\{^\#\{x_j>a\}<b\}}\right)\le \sum_{i=1}^{N\wedge (b-1)}k\mathbb{E}(Y_{ni})\\
%&\le& \mathbb{E}\{N\wedge (b-1)\}<b. 
%\eqn
%Otherwise, set $x_{m,i}=x+i+ep_{mi}/(1+1/m)$, where $\ep_{m,i}$ are iid random variables uniformly distributed on the interval $(0,1/m)$. Then by what have been proved, we conclude that 
%\bqn
%\mathbb{E}\left(\siln x_iI(x_{m,i}>a)I_{\{^\#\{x_{m,j}>a\}<b\}}\right)\le b.
%\eqn
%Letting $m\to \infty$, then the left side above tends to the left hand side of (\ref{lln1}). Then what to be proved follows.
%*****************
Let  $\{i_1,\dots,i_k\}$ be a subset of $\{1,2,\dots,n\}$ for an integer $k$ and denote the event %where $|x_i|>a$ if $i\in \{i_1,\dots,i_k\}$ and $x_i\le a$ otherwise by $E_{\{i_1,\dots,i_k\}}$. 
\begin{equation*}
E_{\{i_1,\dots,i_k\}}= \bigcap_{j=1}^nA_{\{j,i_1,\dots,i_k\}},
\end{equation*}
where
$$A_{\{j,i_1,\dots,i_k\}}=\left\{
    \begin{array}{ll}
      \{x_j>a\} & ~\text{if }j\in \{i_1,\dots,i_k\}\\
      \{x_j\le a\} & ~\text{otherwise}.
          \end{array}
  \right.
$$
 Without loss of generality, when the indices are listed in a subset, they are listed in their partial order, i.e., $i_1<i_2<\cdots<i_k$.  
Notice that
\begin{align}
	\text{ the events  $E_{\{i_1,\dots,i_k\}}$ are disjoint for different subscript sets.}\tag{$*$}
\end{align}
Thus, we can obtain that
\begin{align*}
S_n
=\sum_{i=1}^nx_iI(x_i>a)I(\#\{j;\ x_j>a\}<b)=\sum_{k=1}^{b-1}J_{k},
\end{align*}
where
$$J_{k} =\sum_{i_1,\dots,i_k}\sum_{t=1}^kx_{i_t} I(E_{\{i_1,\dots,i_k\}})=\max_{i_1,\dots,i_k}\sum_{t=1}^kx_{i_t} I(E_{\{i_1,\dots,i_k\}})~~~~~~~
       \mbox{(because of $(*)$)}.$$
       Define
$$
  J_{ek}= \max_{i_1,\dots,i_k}\sum_{t=1}^k\Big(\mathbb{E} x_{i_t}I(x_{i_t}>a)\Big) I\left(\bigcap_{j\neq i_t}A_{\{j,i_1,\dots,i_k\}}\right)$$and$$
   J_{ok}= \max_{i_1,\dots,i_k}\sum_{t=1}^k I\left(\bigcap_{j\neq i_t}A_{\{j,i_1,\dots,i_k\}}\right).
  % &=& \max_{i_1,\dots,i_k}\sum_{t=1}^k I(\{x_i>a,\mbox{ if } i\in \{i_1,\dots,i_k\}\backslash\{i_t\} \mbox{ and } x_i\le a,\mbox{ if } i\nin\
  $$
 It is easy to find that $\mathbb{E} S_n=\sum _{k<b}\mathbb{E} J_{k}=\sum _{k<b}\mathbb{E} J_{ek}$ and $J_{ek}\le J_{ok}$ since $\mathbb{E} x_iI(x_i\le a)\le 1$.
 Then,  because $(*)$,
 \bqa\label{lln0}
  J_{ok}%&=&\max_{i_1,\dots,i_k}\sum_{t=1}^k I(\{x_i>a,\mbox{ if } i\in \{i_1,\dots,i_k\}\backslash\{i_t\} \mbox{ and } x_i\le a,\mbox{ if } i\nin\{i_1,\dots,i_k\}\})  \non
 &=&\max_{i_1,\dots,i_k}\left[\sum_{t=1}^k \left(I\big(E_{\{i_1,\dots,i_k\}}\big)+I\big(E_{\{i_1,\dots,i_k\}\backslash\{i_t\}}
 \big)\right)\right]\non
 &=&\max_{i_1,\dots,i_k}\left[kI\big(E_{\{i_1,\dots,i_k\}}\big)+\max_{t\leq k}I\big(E_{\{i_1,\dots,i_k\}\backslash\{i_t\}}
 \big)\right]\non
&\le &\sum_{i_1,\dots,i_k}kI\big(E_{\{i_1,\dots,i_k\}}
\big)+\sum_{i_1,\dots,i_{k-1}}I\big(E_{\{i_1,\dots,i_{k-1}\}}\big)\non
 &= &k\{\{\#\{i;x_i>a\}=k\}\}+\{\#\{i;x_i>a\}=k-1\}.
 \eqa 
 %Here, we have used the following facts: 
% 
% \bdes
% \item{[1]} the $k$ subsets of $k-1$ indices $\{i_1,\dots,i_k\}\backslash\{i_t\}, t=1,2,\dots,k,$ are distinct each other. 
% \item{[2]} For each set of $k-1$ indices $i_1,\dots,i_{k-1}$, the $k$ subsets of $k-1$ indices, may appear ones in the $n-k+1$ subsets of $k$ indices $\{i_1,\dots,i_{k-1},j\}, j\nin\{i_1,\dots,i_{k-1}\}$. 
% \item{[3]} the total numbers of subsets of $k-1$ indices equals the number of $k$ subsets ${n\choose k}$ multiplying $k$ and divided by $n-k+1$=${n\choose k-1}$.  
% \edes
 Therefore, we have
\begin{align*}
   &\mathbb{E} S_n\le \sum_{k=1}^{b-1}\mathbb{E} J_{ek}\le \sum_{k=1}^{b-1}\mathbb{E} J_{ok}\\
 \le&  (b-1) P(\#\{i; x_i>a\}\le b-1)+P(\#\{i; x_i>a\}\le b-2)\le b,
\end{align*}
which completes the proof of Lemma \ref{lln}.
\end{proof}
\begin{lemma}\label{lln*}
Let $x_{ij},1\le i,j\le n$ satisfying the conditions in Theorem \ref{circularlaw}, then we have the following Strong Law of Large Numbers 
\bqa
\frac1{n^2}\sum_{i,j=1}^n |x_{ij}^2|\to 1, a.s.. 
\eqa
\end{lemma}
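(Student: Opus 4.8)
\textbf{Proof proposal for Lemma \ref{lln*}.}

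The plan is to reduce the strong law of large numbers for $\frac1{n^2}\sum_{i,j} |x_{ij}^2|$ to a standard truncation-plus-subsequence argument, using Lemma \ref{lln} to control precisely the part that is \emph{not} accessible by classical methods, namely the contribution of the few extremely large entries that Lindeberg's condition (\ref{Lindecirc}) allows to persist. Write $y_{ij} = |x_{ij}|^2$, so the $y_{ij}$ are independent nonnegative random variables with $\mathbb{E} y_{ij} = 1$, and we must show $\frac1{n^2}\sum_{i,j\le n} y_{ij} \to 1$ a.s. Split each $y_{ij}$ at the level $a_n := \eta^2 n$ (for a fixed but arbitrarily small $\eta>0$) into the truncated part $y_{ij} I(y_{ij}\le a_n)$ and the tail part $y_{ij} I(y_{ij} > a_n)$.

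First I would handle the truncated part. Let $\hat y_{ij} = y_{ij} I(y_{ij}\le \eta^2 n) - \mathbb{E} y_{ij} I(y_{ij}\le \eta^2 n)$. By Lindeberg's condition, $\frac1{n^2}\sum_{i,j}\mathbb{E} y_{ij} I(y_{ij}\le\eta^2 n) \to 1$, so it suffices to prove $\frac1{n^2}\sum_{i,j}\hat y_{ij}\to 0$ a.s. Since $|\hat y_{ij}|\le \eta^2 n$ and, after passing to a polynomially growing subsequence $n_k$ (say $n_k = \lfloor k^{1+\rho}\rfloor$ for small $\rho>0$ so that $\sum_k n_k^{-\delta}<\infty$ for the relevant exponent), one gets the a.s. convergence along $n_k$ by a fourth-moment Markov bound or by Bernstein's inequality: $\operatorname{Var}\big(\frac1{n^2}\sum_{i,j}\hat y_{ij}\big)\le \frac1{n^4}\sum_{i,j}\mathbb{E} y_{ij}^2 I(y_{ij}\le\eta^2 n)\le \frac{\eta^2}{n^2}\sum_{i,j}\mathbb{E} y_{ij} = \eta^2/n^2$... actually more carefully $\le \frac{\eta^2 n}{n^4}\sum_{i,j}\mathbb{E} y_{ij} = \eta^2/n$, which is summable along $n_k$, and the gaps between consecutive $n_k$ are filled by monotonicity of the partial sums in the number of (nonnegative) summands together with $n_{k+1}/n_k\to 1$. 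This is the routine part.

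The tail part is where Lemma \ref{lln} does the real work, and it is the step I expect to be the main obstacle to state cleanly. For the event that \emph{many} entries exceed $\eta^2 n$: the expected number of pairs $(i,j)$ with $y_{ij}>\eta^2 n$ is $\sum_{i,j}\mathbb{P}(y_{ij}>\eta^2 n)\le \eta^{-2} n^{-1}\sum_{i,j}\mathbb{E} y_{ij} I(y_{ij}>\eta^2 n) = o(n)$ by (\ref{Lindecirc}), so by Markov and Borel--Cantelli along $n_k$ one has $\#\{(i,j): y_{ij}>\eta^2 n_k\} < b := \lceil \text{(some slowly growing or fixed bound)}\rceil$ eventually; in fact it is $o(n_k)$, so for any fixed $b$ it is $<b$ for $k$ large, but to be safe one takes $b = b_k \to\infty$ slowly. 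On this "few large entries" event, Lemma \ref{lln} (applied with $a = \eta^2 n$, the index set $\{(i,j)\}$ of size $n^2$ relabeled as $1,\dots,n^2$, and the bound $b$) gives $\mathbb{E}\big[\sum_{i,j} y_{ij} I(y_{ij}>\eta^2 n) I(\#\{(i,j): y_{ij}>\eta^2 n\}<b)\big]\le b$, hence $\frac1{n^2}$ times this has expectation $\le b/n^2$, which is summable; combined with the complementary event having probability $o(1)$ (summable along $n_k$ if $b_k$ grows fast enough relative to the $o(n_k)$ bound, e.g. $b_k = n_k^{1/2}$ makes $\mathbb{P}(\#\{\cdot\}\ge b_k)\le o(n_k)/n_k^{1/2} = o(n_k^{1/2})$ — so instead choose $b_k$ a power like $n_k^{3/4}$ and note $b_k/n_k^2\to 0$ still summable) one concludes $\frac1{n^2}\sum_{i,j} y_{ij}I(y_{ij}>\eta^2 n_k)\to 0$ a.s. along $n_k$, and again monotonicity fills the gaps. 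Letting $\eta\to 0$ through a countable sequence closes the argument. The delicate bookkeeping is the joint choice of the subsequence $n_k$ and the threshold $b_k$ so that both the "many large entries" probability and the $b_k/n_k^2$ term are summable while $b_k$ still dominates the typical count $o(n_k)$; this is exactly the role Lemma \ref{lln} was built to serve, replacing the unavailable second-moment control on the largest entries.
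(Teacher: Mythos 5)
Your overall plan coincides with the paper's: truncate, control the truncated part by a variance/higher-moment bound plus Borel--Cantelli, and handle the contribution of the large entries by Lemma \ref{lln} combined with a bound on the probability that there are many large entries. The genuine gap is precisely in that last probability bound, the step you yourself flag as delicate. Truncating at the Lindeberg level $a_n=\eta^2 n$, the only available estimate on the expected number of exceedances is $\sum_{i,j}\mathbb{P}(|x_{ij}|^2>\eta^2 n)\le (\eta^2 n)^{-1}\sum_{i,j}\mathbb{E}|x_{ij}|^2 I(|x_{ij}|>\eta\sqrt n)=o(n)$, with no rate. Markov's inequality then gives $\mathbb{P}(N_k\ge b_k)\le o(n_k)/b_k$, and for your proposed $b_k=n_k^{1/2}$ or $b_k=n_k^{3/4}$ this is $o(n_k^{1/2})$, resp.\ $o(n_k^{1/4})$ --- not even a vanishing bound, let alone a summable one. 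Your adjustment of $b_k$ repairs the summability of $b_k/n_k^2$ (which was never the binding constraint) but not the probability of the ``many large entries'' event, so the tail part of the argument does not close as written.

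The paper's resolution is to truncate higher, at $|x_{ij}|>n^{3/4}$. Then the expected number of exceedances is at most $n^{-3/2}\sum_{i,j}\mathbb{E}|x_{ij}|^2 I(|x_{ij}|>\sqrt n)=o(n^{1/2})$, far below the threshold $b=n^{3/4}$, and Bernstein's inequality for the sum of indicators yields $\mathbb{P}(N>n^{3/4})\le 2e^{-cn^{3/4}}$, summable in $n$; Lemma \ref{lln} with $a=n^{3/2}$ and $b=n^{3/4}$ gives $\mathbb{E}[S_{n1}I(N\le n^{3/4})]\le n^{-5/4}$, also summable, so the large-entry part converges a.s.\ along the full sequence with no subsequence or gap-filling needed. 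The remaining part $|x_{ij}|\le n^{3/4}$ is handled in the paper by a sixth-moment bound $O(n^{-3/2})$ (again summable in $n$), whereas your variance bound $\eta^2/n$ forces the $n_k$-plus-monotonicity device; that portion of your argument is sound, just heavier, and note that for fixed small $\eta$ the truncated means already converge to $1$, so your final limit $\eta\to0$ is unnecessary. If you prefer to keep the truncation at $\eta\sqrt n$, the minimal repair is to take $b_k=\delta n_k$ and bound $\mathbb{P}(N_k\ge\delta n_k)$ by Bernstein rather than Markov, giving $e^{-cn_k}$, summable, while $b_k/n_k^2=\delta/n_k$ stays summable along $n_k=\lfloor k^{1+\rho}\rfloor$; the gap-filling for the tail term must then be done with the mixed quantity (indices up to $n_{k+1}$, truncation level $\eta^2 n_k$), which works the same way.
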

\begin{proof}
Define 
\bqn
S_{n1}&=&\frac1{n^2}\sum_{i,j=1}^n|x_{ij}^2|I(|x_{ij}|>n^{3/4}),\\
S_{n2}&=&\frac1{n^2}\sum_{i,j=1}^n|x_{ij}^2|I(|x_{ij}|\le n^{3/4}),
\eqn
and denote the number of pairs $(ij)$ such that $|x_{ij}|>n^{3/4}$ by $N$. Then by Lemma \ref{lln}, we have
\bqn
\mathbb{E} S_{n1}I(N\le n^{3/4})\le n^{-5/4},
\eqn
which together with the Borel-Cantelli lemma implies
$$S_{n1}I(N\le n^{3/4})\to 0, ~a.s..$$ 
Moreover, by Bernstein's inequality, we have that
\begin{align*}
  &\mathbb{P}(S_{n1}I(N>n^{3/4})\ne 0)
\le \mathbb{P}\Big(\sum_{i,j=1}^nI(|x_{ij}|> n^{3/4})> n^{3/4}\Big)\\
=&\mathbb{P}\Big(\sum_{i,j=1}^n[I(|x_{ij}|> n^{3/4})-\mathbb{E}I(|x_{ij}|> n^{3/4})]> n^{3/4}-\sum_{i,j=1}^n\mathbb{E}I(|x_{ij}|> n^{3/4})]\Big)
\le 2e^{-bn^{3/4}}
\end{align*}

for some constant $b>0$. Here, we have used the facts that 
\bqn
&&Var(\sum_{i,j=1}^nI(|x_{ij}|> n^{3/4})\le \mathbb{E}(\sum_{i,j=1}^nI(|x_{ij}|> n^{3/4})\\
&\le& \frac1{n^{3/2}}\sum_{i,j=1}^n \mathbb{E}|x_{ij}^2|I(|x_{ij}|>n^{3/4})\le \frac1{n^{3/2}}\sum_{i,j=1}^n \mathbb{E}|x_{ij}^2|I(|x_{ij}|>n^{1/2})=o(n^{1/2}). 
\eqn 
Thus, we have that $$S_{n1}\to 0, ~a.s..$$
For $S_{n2}$, we have 
\bqa
0\le 1-\mathbb{E}S_{n2}\le \frac1{n^2}
\sum_{i,j=1}^n\mathbb{E}|x_{ij}^2|I(|x_{ij}|>n^{3/4})\to 0
\eqa
and 
\bqa
\mathbb{E}\left[S_{n2}-\mathbb{E} S_{n2}\right]^6
\le O(n^{-3/2}).
\eqa
From the two estimates above, we have $\frac1{n^2}\sum_{i,j=1}^n |x_{ij}^2|I(|x_{ij}|\le n^{3/4})\to 1, a.s.$. Then the proof is complete.
\end{proof}
%Under the conditions of Theorem \ref{circularlaw}, we have, by
%Lemma \ref{regionreduc0} and the truncation given later in Subsection \ref{circtrunsec},
%\bqn \fn\skln I(|\gl_k|\ge A)\le \frac{1}{n^2A^2}\rtr
%(\bbX_n\bbX_n^*)\to \frac1{A^2},\ {\rm a.s.} \eqn 
%where the last step can be seen by 
%\bqn
%\mathbb{E}\rtr(\bbX_n\bbX_n^*)&=&n^2\\
%\mathbb{E}|\rtr
%(\bbX_n\bbX_n^*)-\mathbb{E}\rtr
%(\bbX_n\bbX_n^*)|^4&=&O(n^{6}).
%\eqn

%Since, from Boole's inequality
%$$\mu_n(\{\gl\ge A\})\leq\fn\skln I(|\gl_k|\ge A)$$
%we see that $\{\mu_n\}$ is almost surely tight.
Next we present the proof of Step II.  
Let \[
g_n(s, t) = \frac{\partial}{\partial s} \int_0^\infty \log x \, \nu_n(dx, z)\mbox{~~and~~}g(s, t) = \frac{\partial}{\partial s} \int_0^\infty \log x \, \nu(dx, z).
\]
It has been shown in Lemma 11.7 in \cite{BaiS10S} that
for any $uv\neq 0$ and $A>2$,  \be \left|\int_{|s|\ge
A}\int_{-\infty}^{\infty}g_n(s,t)e^{ius+ivt}dtds\right| \le
\frac{4\pi}{|v|}e^{-\fii|v|A}+\frac{2\pi}{n|v|}\sum_{k=1}^nI\left(|\gl_k|\ge\frac12
A\right) \label{lm431} \ee and \be \left|\int_{|s|\le A}\int_{|t|\ge
A^3}g_n(s,t)e^{ius+ivt}dsdt\right| \le \frac{8A}{A^2-1}+\frac{4\pi
A}{n}\skln I\left(|\gl_k|\ge A\right). \label{lm432} \ee
If the function $g_n(s, t)$ is replaced by $g(s, t)$, the two inequalities above hold without the second terms. 
 It is worth noting that these results are obtained by basic calculations and are independent of Theorem \ref{circularlaw}'s moment conditions. Thus, from Lemma \ref{lln*}, one can find that the right-hand sides of
(\ref{lm431}) and (\ref{lm432}) can be made arbitrarily small by
making $A$ large enough because of the fact
\bqa\label{lln3}
\limsup_n \frac1n\skln I(|\gl_k|\ge A)\le \limsup_n \frac1{n^2A^2}\rtr \bbX_n\bbX_n^*\le \frac1{A^2}, a.s..
\eqa
The same conclusion holds when $g_n(s,t)$ is
replaced by $g(s,t)$. Define set
\[
T=\{(s,t):\ |s|\le A, |t|\le A^3 \}.%\ \mbox{ and }\ ||z|-1|\ge \ep \}
\]
%and
%\[
%T_1=\{(s,t):\ ||z|-1| < \ep \},
%\]
%where $z=s+it$.
Therefore,  the proof of the circular law 
reduces to show that, for any large $A>0$, \be
\int\!\!\!\int_{T}[g_n(s,t)-g(s,t)]e^{ius+ivt}dsdt \to 0, ~~\mbox{a.s.
}. \label{trntd} \ee

\section {Step III: Completing  of the proof Theorem \ref{circularlaw}}\label{SIII}
In this section, we start with a lemma to complete the proof of Theorem \ref{circularlaw}.  
\begin{lemma}\label{limittozero}
	Let $f_n(\varepsilon)$ be a sequence of functions of $\varepsilon$. If for every fixed $\varepsilon>0$, $f_n(\varepsilon) \to 0$ as $n \to \infty$, then there exists a sequence $\varepsilon_n \to 0$ such that $f_n(\varepsilon_n) \to 0$.
\end{lemma}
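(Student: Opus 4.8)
The plan is to extract the sequence $\varepsilon_n$ by a standard diagonal-type argument. For each positive integer $k$, apply the hypothesis with the fixed value $\varepsilon = 1/k$: since $f_n(1/k) \to 0$ as $n \to \infty$, there exists an index $N_k$ such that $|f_n(1/k)| \le 1/k$ for all $n \ge N_k$. Without loss of generality we may take the sequence $N_1 < N_2 < N_3 < \cdots$ to be strictly increasing (replace $N_k$ by $\max\{N_k, N_{k-1}+1\}$ if necessary). Now define $\varepsilon_n$ by setting $\varepsilon_n = 1$ for $n < N_1$, and $\varepsilon_n = 1/k$ for $N_k \le n < N_{k+1}$. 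This is well-defined because the intervals $[N_k, N_{k+1})$ partition $\{n : n \ge N_1\}$.

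It remains to verify the two conclusions. First, $\varepsilon_n \to 0$: given any $\delta > 0$, choose $k$ with $1/k < \delta$; then for all $n \ge N_k$ we have $\varepsilon_n \le 1/k < \delta$ by construction, since such an $n$ lies in some interval $[N_j, N_{j+1})$ with $j \ge k$, giving $\varepsilon_n = 1/j \le 1/k$. Second, $f_n(\varepsilon_n) \to 0$: for $n \ge N_1$, if $n \in [N_k, N_{k+1})$ then $\varepsilon_n = 1/k$ and $n \ge N_k$, so by the defining property of $N_k$ we get $|f_n(\varepsilon_n)| = |f_n(1/k)| \le 1/k$; and as $n \to \infty$ the corresponding index $k = k(n) \to \infty$, so $|f_n(\varepsilon_n)| \le 1/k(n) \to 0$.

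This argument is entirely elementary and presents no real obstacle; the only point requiring a little care is ensuring the cutoff indices $N_k$ are chosen increasing so that the piecewise definition of $\varepsilon_n$ is unambiguous, and that $k(n) \to \infty$, both of which are handled above. Note that no measurability or continuity of $f_n$ in $\varepsilon$ is needed — the statement is purely about the existence of a suitable deterministic sequence, which is exactly what the later application in Step III requires in order to introduce the truncation level $\varepsilon_n$ in \eqref{ctail}--\eqref{body}.
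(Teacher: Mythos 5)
Your proof is correct and follows essentially the same diagonal argument as the paper: choose cutoff indices $N_k$ so that $|f_n(1/k)| \le 1/k$ for $n \ge N_k$ and set $\varepsilon_n = 1/k$ on $[N_k, N_{k+1})$. Your extra care in making the $N_k$ strictly increasing is a detail the paper leaves implicit, but the substance is identical.
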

\begin{proof}
	By the assumption, for every $k \geq 1$, $f_n(1/k) \to 0$. Therefore, we can find $n_k$ such that for all $n \geq n_k$, $f_n(1/k) < 1/k$. When $n_k \leq n < n_{k+1}$, define $\varepsilon_n = 1/k$. Then $\varepsilon_n \to 0$ and $f_n(\varepsilon_n) < 1/k \to 0$. The proof is complete.
\end{proof} 
Now,
we define the random subset of $\mathbb R^2$
\bqa\label{randt}
%T(\ep)&=&\{(s,t)\in T, \mbox{ if there is some $k\le n$ such that } |\gl_{ki}-t|\le \ep/2,\non
%&&\ \ \ \ \ \ \ \ \ \ \ \mbox{ then } |\gl_k-z|>\ep\}
T(\ep)=T\backslash [\cup_{k=1}^n\{(s,t):|t-\gl_{ki}|\le \ep/2, |z-\gl_k|\le \ep\}].
\eqa 
Intuitively, $T(\ep)$ is $T$ excluding the drum shaped regions $\{(s,t):|t-\gl_{ki}|\le \ep/2, |z-\gl_k|\le \ep\}$ around all complex eigenvalues  $\gl_k$, $k=1,\dots,n$.
For any $(s,t)\in T(\ep)$ and any $k\le n$, it is easy to check 
\[ |\gl_k-z|\ge
\begin{cases}
    \ep, & \text{if~} |\gl_{ki}-t|\le \ep/2\\
     \ep/2, &  \text{otherwise}.
\end{cases}
\]
On the other hand, by the arbitrary of $A$ and Lemma \ref{lln*}, we can assume that $|\gl_k-z|$ is uniformly bounded from above for  $(s,t)\in T$.

From Step II, to complete the proof of Theorem \ref{circularlaw}, we only need to prove \eqref{limint}, i.e.,
\bqa &&
\lim_{n\to\infty}\int\!\!\!\int_{T}\left[\frac{\partial}{\partial s} \int_0^\infty
\ln x \nu_n(dx,z)-\frac{\partial}{\partial s} \int_0^\infty
\ln x \nu(dx,z)\right] e^{ius+ivt} dsdt=0.\nonumber \eqa
Notice that 
$$\left|\int\!\!\!\int_{T\backslash T(\ep)}\frac{\partial}{\partial s} \int_0^\infty
\ln x \nu_n(dx,z)e^{ius+ivt} dsdt\right|\leq \int\!\!\!\int_{T} \left|\frac{\partial}{\partial s} \int^{\ep^2}_0
\ln x \nu_n(dx,z) \right|dsdt$$
and
$$\left|\int\!\!\!\int_{T\backslash T(\ep)}\frac{\partial}{\partial s} \int_0^\infty
\ln x \nu(dx,z)e^{ius+ivt} dsdt\right|\leq \int\!\!\!\int_{T} \left|\frac{\partial}{\partial s} \int^{\ep^2}_0
\ln x \nu(dx,z) \right|dsdt.$$
Thus, 
by Lemma \ref{limittozero}, what we need is proving Step III, i.e.,
\bqa
&\int\!\!\!\int_{T}\left|\frac{\partial}{\partial s}\int_0^{\ep_n^2}
 \log x \nu_n(dx,z)\right| dtds\stackrel{a.s.} \to 0, ~~\mbox{as $n\to\infty$},
\label{key2}\\
&\int\!\!\!\int_{T}\left|\frac{\partial}{\partial s}\int_0^{\ep_n^2}
 \log x \nu(dx,z)\right|  dtds\to 0, ~~\mbox{as $n\to\infty$},
\label{key3}\\
 &\int\!\!\!\int_{T(\ep)}\frac{\partial}{\partial s}\int_{0}^{\infty}
\ln x (\nu_n(dx,z)-\nu(dx,z))dsdt\stackrel{a.s.} \to 0.~~\mbox{as $n\to\infty$}.\label{key1}
\eqa 
First, we show (\ref{key2}). Recall  the complex
eigenvalues $\lambda_k=\lambda_{kr}+i\lambda_{ki}$ of
$\fnn \bbX_n$. 
Note that 
\[
\frac{1}{n} \sum_{k=1}^{n} \frac{\lambda_{kr} - s}{|\lambda_k - z|^2}
= -\frac{1}{2n} \sum_{k=1}^{n} \frac{\partial}{\partial s} \log(|\lambda_k - z|^2)
= -\frac{1}{2} \frac{\partial}{\partial s} \int_{0}^{\infty} \log x \, \nu_n(dx, z).
\]
Thus, we have that
%\bqa
% \int_{z\in T}\frac{\partial}{\partial s}\int_{\ep_n}^\infty
% \log x \nu_n(dx,z)dtds 
% =\int_{z\in T}\frac1{n}\sum_{k=1}^n\frac{2(\gl_{kr}-s)I_k(\ep_n)}{(\gl_{kr}-s)^2+(\gl_{ki}-t)^2}dsdt,
%\eqa
%where 
%
\begin{gather*}
	\int\!\!\!\int_{T}\left|\frac{\partial}{\partial s}\int_0^{\ep_n^2}
 \log x \nu_n(dx,z)\right| dtds
 =\int\!\!\!\int_{T}\left|\frac1{n}\sum_{k=1}^n\frac{2(\gl_{kr}-s)I_k(\ep_n)}{(\gl_{kr}-s)^2+(\gl_{ki}-t)^2}\right|dsdt\non
 \le \frac2{n}\sum_{k=1}^n\int\!\!\!\int_{T}\frac{|\gl_{kr}-s|I_k(\ep_n)}{(\gl_{kr}-s)^2+(\gl_{ki}-t)^2}dsdt,
\end{gather*}
where 
$I_k(\ep_n)=I((\gl_{kr}-s)^2+(\gl_{ki}-t)^2\leq\ep_n^2)$.
Note that on $\{(\gl_{kr}-s)^2+(\gl_{ki}-t)^2\leq\ep_n^2\}$, we have $|z-\gl_k|\le \ep_n$. Therefore, 
\bqa\label{key4}
 &&\int\!\!\!\int_{T}\left|\frac{\partial}{\partial s}\int_0^{\ep_n^2}
 \log x \nu_n(dx,z)\right| dtds \non
 &\le&\frac2{n}\sum_{k=1}^n\int_{|\gl_k-z|\le \ep_n}\frac{|\gl_{kr}-s|}{(\gl_{kr}-s)^2+(\gl_{ki}-t)^2}dsdt= 8\ep_n \to 0,
 \eqa
which proves (\ref{key2}). 

Similar to (\ref{key4}), one can prove that 
\bqn
 &&\int\!\!\!\int_{T}\left|\frac{\partial}{\partial s}\int_0^{\ep_n^2}
 \log x \nu(dx,z)\right|  dtds\non
 &\le&\frac{2}{\pi}\int_{|\gl|\le 1} \int_{|\gl-z|\le \ep_n}\frac{|\gl_r-s|}{(\gl_{r}-s)^2+(\gl_{i}-t)^2}dsdtd\gl_rd\gl_i= 8 \ep_n \to 0.
\eqn
This proves (\ref{key3}). 

For \eqref{key1}, for any $t\in (-A^3, A^3)$ and $\ep>0$, it is easy to find that the intersection of $T(\ep)$ with straight line $t$ consists of finite intervals. Write the end points as $s_1,\dots, s_{2\ell(t)}$ and 
$z_j=(s_j, t)$.  It is worth noting that here $\ell(t)$ can be 0.
It follows that 
\bqa
&&\int\!\!\!\int_{T(\ep)}\frac{\partial}{\partial s}\int_{0}^\infty
 \log x \nu_n(dx,z)dtds\non
 &= &\frac1n\skln\int_{-A^3}^{A^3}\sum_{j=1}^{\ell(t)}\int_{s_{2j-1}}^{s_{2j}}
\frac{2(s-\gl_{kr})}{(s-\gl_{kr})^2+(t-\gl_{ki})^2}
dsdt\non
 &=&\frac1n\skln\int_{-A^3}^{A^3}\sum_{j=1}^{\ell(t)}(\log(|z_{2j}-\gl_k|^2)-\log(|z_{2j-1}-\gl_k|^2)dt\non
 &=&\int_{-A^3}^{A^3}\sum_{j=1}^{\ell(t)}\left(\int_{0}^\infty
 \log x \nu_n(dx,z_{2j})- \log x \nu_n(dx,z_{2j-1})\right)dt\nonumber.  \eqa
   It is shown by \cite{ZhouB23L} that $\nu_n(x,z)$ almost surely and uniformly tends to $\nu(x,z)$ and $\nu(\cdot,z)$ in every bounded region of $z$. Thus,  by (\ref{randt}) and the dominated convergence theorem, we have that
 \begin{align*}
&\int\!\!\!\int_{T(\ep)}\frac{\partial}{\partial s}\int_{0}^\infty
 \log x \nu_n(dx,z)dtds-\int\!\!\!\int_{T(\ep)}\frac{\partial}{\partial s}\int_{0}^\infty
 \log x \nu(dx,z)dtds\\
 =&\int_{-A^3}^{A^3}\sum_{j=1}^{\ell(t)}\int_{0}^\infty
 \log x \Big(\nu_n(dx,z_{2j})- \nu(dx,z_{2j})-\nu_n(dx,z_{2j-1})
+\nu(dx,z_{2j-1})\Big)dt\non
 \stackrel{a.s.} \to&0.
\end{align*}
  Then, we obtain \eqref{key1}.  
  Consequently, Theorem \ref{circularlaw} is proved.
%    From this, we have
%  \bqa
% && \left|\int_{z\in T}\frac{\partial}{\partial s}\int_{0}^\infty
% \log x (\nu_n(dx,z)-\nu(dx,z))dtds\right|\non
% &\le &\left|\int_{z\in T(\ep)}\frac{\partial}{\partial s}\int_{0}^\infty
% \log x \nu_n(dx,z)dtds\right|+R (\ep)\non
% &\to & R(\ep)\le K\ep.
%  \eqa
%By the arbitrariness of $\ep$, we have shown (\ref{key1*}) and hence Theorem \ref{circularlaw} is proved.
%
%
%
%Therefore, for any fixed $\ep>0$, we have 
%\begin{gather*}
%\int_{z\in T}\frac{\partial}{\partial s}\int^\infty_{\ep}
% \log x \nu_n(dx,z)dtds =\int_{z\in T}\frac1{n}\sum_{k=1}^n\frac{2(\gl_{kr}-s)I^c_k(\ep)}{(\gl_{kr}-s)^2+(\gl_{ki}-t)^2}dsdt
%\end{gather*}
%
%
%
%
%
%
%\bqn
% &&\int_{z\in T}\int_{\ep}^M
% \log x \nu_n(dx,z)dtds \stackrel{a.s.}\to \int_{z\in T}\int_{\ep}^M
% \log x \nu(dx,z)dtds,
% \eqn
% which together with Dominated Convergence Theorem implies that 
% \bqn
% &&\int_{z\in T}\frac{\partial}{\partial s}\int_{\ep}^M
% \log x \nu_n(dx,z)dtds =\int_{z\in T}g_n(s,t)I_n(\ep,M)dsdt\\
% &\stackrel{a.s.}\to &\int_{z\in T}g(s,t)I(\ep,M)dsdt=\frac{\partial}{\partial s}\int_{\ep}^M
% \log x \nu(dx,z)dtds.
% \eqn
%Here $I_n(\ep,M)=I(\ep\le \nu_n^{-1}(x,z)\le M)\stackrel{a.s.}\to I(\ep\le \nu^{-1}(x,z)\le M)=I(\ep,M)$.
%Fixing $\ep$ and letting $M\to \infty$, since $g_n(s,t)$ and $g(s,t)$ are bounded, we obtain 
%\bqn
% &&\int_{z\in T}\frac{\partial}{\partial s}\int_{\ep}^\infty
% \log x \nu_n(dx,z)dtds =\int_{z\in T}g_n(s,t)I_n(\ep,\infty)dsdt\\
% &\stackrel{a.s.}\to &\int_{z\in T}g(s,t)I(\ep,\infty)dsdt=\int_{z\in T}\frac{\partial}{\partial s}\int_{\ep}^\infty
% \log x \nu(dx,z)dtds.
% \eqn
%
	
	\section*{Acknowledgments}
	Zhidong Bai was partially supported by NSFC Grants No.12171198, No.12271536, and Team Project of Jilin Provincial Department of Science and Technology No.20210101147JC. Jiang Hu was partially supported by NSFC Grants Nos. 12171078, 12292980, No12292982, No. National Key R $\&$ D Program of China No. 2020YFA0714102, and Fundamental Research Funds for the Central Universities No. 2412023YQ003.
%\bibliographystyle{imsart-nameyear} 
%\bibliography{/Users/john/Library/CloudStorage/Dropbox/My_Library}

\end{document}